 \newtheorem{theorem}{Theorem}[section]
 \newtheorem{proposition}[theorem]{Proposition}
 \newtheorem{example}[theorem]{Example}
 \theoremstyle{definition}
 \newtheorem{definition}[theorem]{Definition}
 \theoremstyle{remark}
 \numberwithin{equation}{section}
\begin{document}

%-------------------------------------------------------------------------
% editorial commands: to be inserted by the editorial office
%
%\firstpage{1} \volume{228} \Copyrightyear{2004} \DOI{003-0001}
%
%
%\seriesextra{Just an add-on}
%\seriesextraline{This is the Concrete Title of this Book\br H.E. R and S.T.C. W, Eds.}
%
% for journals:
%
%\firstpage{1}
%\issuenumber{1}
%\Volumeandyear{1 (2004)}
%\Copyrightyear{2004}
%\DOI{003-xxxx-y}
%\Signet
%\commby{inhouse}
%\submitted{March 14, 2003}
%\received{March 16, 2000}
%\revised{June 1, 2000}
%\accepted{July 22, 2000}
%
%
%
%---------------------------------------------------------------------------
%Insert here the title, affiliations and abstract:
%

 \title[A Characterization of Weak Proximal Normal Structure]{A Characterization of Weak Proximal Normal Structure and Best Proximity Pairs}

 %----------Author 1
 \author[Abhik Digar]{Abhik Digar}

 \address{%
 Department of Mathemtics\\
 IIT Ropar\\
 Rupnagar - 140 001\\
 Punjab, India.}

\email{abhikdigar@gmail.com}

\author[Rafael Esp\'{\i}nola Garc\'{\i}a]{Rafael Esp\'{\i}nola Garc\'{\i}a}
 \address{%
 Departamento de Análisis Matemático,\\ Facultad de Matemáticas, IMUS,\\ Universidad de Sevilla, 41010 , Sevilla, Spain.}
 \email{espinola@us.es} 

% \thanks{The work is carried out during DST-SERB project(File Number: YSS/2015/000605).}
 %----------Author 2
 \author[G S Raju Kosuru]{G. Sankara Raju Kosuru}
 \address{%
 Department of Mathemtics\\
 IIT Ropar\\
 Rupnagar - 140 001\\
 Punjab, India.}
 \email{raju@iitrpr.ac.in}

 %----------classification, keywords, date
 \subjclass{47H10, 46C20, 54H25.}
%

%\title{A Characterization of Weak Proximal Normal Structure and Best Proximity Pairs%\thanks{Grants or other notes
%about the article that should go on the front page should be
%placed here. General acknowledgments should be placed at the end of the article.}
%}
%\subtitle{A Characterization of Weak Proximal Normal Structure}

%\titlerunning{Short form of title}        % if too long for running head

%\author{Abhik Digar         \\
%        Rafael Esp\'{\i}nola  \\
%        G. Sankara Raju Kosuru
%}
%
%%\authorrunning{Short form of author list} % if too long for running head
%
%\institute{Abhik Digar \\
%              Department of Mathematics, Indian Institute of Technology Ropar, Punjab-140 001, India. \\
%%              Tel.: +123-45-678910\\
%%              Fax: +123-45-678910\\
%              \email{abhikdigar@gmail.com}           %  \\
%%             \emph{Present address:} of F. Author  %  if needed
%           \\
%           Rafael Esp\'{\i}nola \\
%              Departamento de Análisis Matemático, Facultad de Matemáticas, IMUS, Universidad de Sevilla, 41010 , Sevilla, Spain.\\
%              \email{espinola@us.es}
%           \\
%           G. Sankara Raju Kosuru \\
%           Department of Mathematics, Indian Institute of Technology Ropar, Punjab-140 001, India.  \\
%              \email{raju@iitrpr.ac.in}
%}
%
%\date{Received: date / Accepted: date}
%% The correct dates will be entered by the editor

\maketitle

\begin{abstract}
The aim of this paper is to address an open problem given in [Kirk, W. A., Shahzad, Naseer, Normal structure and orbital fixed point conditions, J. Math. Anal. Appl. {\bf{vol 463(2)}}, (2018) 461--476]. We give a characterization of weak proximal normal structure using best proximity pair property. We also introduce a notion of pointwise cyclic contraction wrt orbits and therein prove the existence of a best proximity pair in the setting of reflexive Banach spaces.
%\keywords{Best proximity pairs \and Proximal normal structure \and Relatively nonexpansive mappings.}
% \PACS{PACS code1 \and PACS code2 \and more}
% \subclass{46E15 \and 47H10 \and 54H25}
\end{abstract}

\section{Introduction and Preliminaries}
\label{intro}
%Your text comes here. Separate text sections with
Let $A, B$ be two non-empty subsets of a Banach space and $T$  be a cyclic mapping on $A\cup B$ ($T(A)\subseteq B, T(B)\subseteq A$). A pair $(x,y)\in A\times B$ is said to be a best proximity pair for $T$ if $\|x-Tx\|=\|y-Ty\|=d(A,B)=\inf\{\|x-y\|:x\in A, y\in B\}.$ The geometry of Banach spaces plays a crucial role for the existence of best proximity pairs. The analysis of proximal normal structure and weak or semi-normal structure, the property UC, the projectional property due to Eldred {\it{et al.}}\cite{Eldred 2005}, Moosa \cite{Moosa 2014}, Suzuki {\it{et al.}} \cite{Suzuki 2009}, G. S. Raju {\it{et al.}} \cite{Raju 2010} etc., respectively are widely used to prove the existence of a best proximity pair for cyclic maps.
%Recently, in \cite{Kirk 2018}, the authors posed an open problem for the existence of a best proximity pair for a more general class of mappings, called relatively orbital nonexpansive mappings. Also therein the authors indicated that an affirmative answer may provide a characterization of proximal normal structure. Motivated by this, we aim to give a partial  affirmative answer for the same. We also provide a characterization of weak proximal normal structure by using the existence of a best proximity pair for relatively orbital nonexpansive mappings. Finally, we introduce the notion of pointwise cyclic contraction wrt orbits and prove the minimal invariant subsets of such a map have nondiametral points. This guarantees the existence of a best proximity pair for such a class in the setting of a reflexive Banach space.  
%%%%%%%%%%%%SECTION 2{PRELIMINARIES}%%%%%%%%%% 
% \section{Preliminaries}
 % Let $A,B$  be two non-empty subsets of a Banach space $X.$
We denote $\sup\{\|x-y\|:y\in B\}$ for $x\in A$ by $\delta(x,B).$ We shall say that the pair $(A,B)$ is proximinal pair if for every $x$ in $A$ (resp. in $B$), there exists $y$ in $B$ (resp. in $A$) such that $\|x-y\|=d(A,B).$  Further, if such a $y$ is unique, then $(A,B)$ is said to be a sharp proximinal pair \cite{Raju 2010}. In this case we denote $y$ by $x'.$ Also, $(A,B)$ is said to be a proximinal parallel pair if $(A,B)$ is sharp proximinal and $B=A+h$ for some $h\in X$ \cite{Espinola 2008}. 
 It is shown in \cite{Espinola 2008} that if $X$ is strictly convex and $A, B$ are weakly compact  convex subsets of $X,$ then every $(A_0,B_0)$ is a non-empty proximinal parallel pair. Here $A_0=\{x\in A: \mbox{there exists}~ y\in B~\mbox{such that}~\|x-y\|=d(A,B)\}$ and $B_0=\{x\in B: \mbox{there exists}~ y\in A~\mbox{such that}~\|x-y\|=d(A,B)\}.$ Also, in \cite{Raju 2010}, the authors have given example(s) of sharp proximinal pair which are not parallel. In \cite{Eldred 2005}, the author introduced a geometrical notion called proximal normal structure to prove the existence of a best proximity pair of a  relatively nonexpansive mapping ($\|Tx-Ty\|\leq \|x-y\|$ for all $x\in A, y\in B.$)
% \begin{definition}
% Let $A, B$ be two nonexpansive subsets of normed linear space. A cyclic map $T:A\cup B \to A\cup B$ is said to be relatively nonexpansive if $\|Tx-Ty\|\leq \|x-y\|$ for all $x\in A, y\in B.$ 
% \end{definition}
We say $(A,B)$ has proximal normal structure (\cite{Eldred 2005}) [respectively weak proximal normal structure (\cite{Moosa 2017})] if $(A,B)$ is convex and for any closed bounded [respectively weakly compact] convex proximinal pair $(H_1,H_2)$ of subsets of $(A,B)$ for which $d(H_1,H_2)=d(A,B)$ and $\delta(H_1,H_2)>d(H_1,H_2),$ there exists $(x,y)\in (H_1,H_2)$ such that $\delta(x,H_2)<\delta(H_1,H_2)$ and $\delta(y,H_1)<\delta(H_1,H_2).$ It is well known that every non-empty closed bounded convex pair $(A,B)$ of a uniformly convex Banach space has proximal normal structure. In fact, every non-empty compact convex pair $(A,B)$ of a Banach space has proximal normal structure. It is proved (Proposition 3.2 in \cite{Moosa 2017}) that a bounded convex pair has proximal normal structure if and only if it doesn't contain any proximal diametral sequence. A pair $\displaystyle\left(\{x_n\},\{y_n\}\right)$ of sequences in $(A,B)$ with $\|x_n-y_n\|=d(A,B), n\geq 1$ is said to be a proximal diametral sequence (\cite{Moosa 2017}) if $d(A,B)<\delta(\{x_n\},\{y_n\})$ and $\displaystyle \max\{\lim_{n\to \infty}d\left(x_{n+1},\mbox{co}\left(\{y_1,y_2,...,y_n\}\right)\right),\lim_{n\to \infty}d\left(y_{n+1},\mbox{co}\left(\{x_1,x_2,...,x_n\}\right)\right)\}=\delta(\{x_n\},\{y_n\}).$ It is easy to see that proximal normal structure coincides with weak proximal normal structure in reflexive Banach spaces \cite{Moosa 2017}.
% Later in 2017, Moosa introduced weak proximal normal structure and characterized the same using proximal diametral sequences.
Moreover, therein the author proved the existence of a best proximity pair in the settings of a reflexive Banach space. Recently, in \cite{Kirk 2018}, the authors posed an open problem for the existence of a best proximity pair for a more general class of mappings, called relatively orbital nonexpansive mappings. Also therein the authors indicated that an affirmative answer may provide a characterization of proximal normal structure. Motivated by this, we aim to give a partial  affirmative answer for the same. We also provide a characterization of weak proximal normal structure by using the existence of a best proximity pair for relatively orbital nonexpansive mappings. Finally, we introduce the notion of pointwise cyclic contraction wrt orbits and prove the minimal invariant subsets of such a map have nondiametral points. This guarantees the existence of a best proximity pair for such a class in the setting of a reflexive Banach space. Finally, we prove the existence of a best proximity pair for the class of pointwise cyclic contraction wrt orbits.

%%%%%%%%%%SECTION: EXISTENCE OF BEST PROXIMITY PAIRS%%%%%%%%%%%%%%%%%%%%%%%%%%%%%%%%%%%%%%

\section{Existence of Best Proximity Pairs}
\label{sec:1}
 Let $A, B$ be two closed convex subsets of a Banach space $X.$ Let $T:A\cup B\to A\cup B$ be a cyclic map. If $T$ admits a best proximity pair, then $A_0\neq \emptyset \neq B_0.$ Also, if $T$ is relatively nonexpansive, then $A_0\cup B_0$ is cyclically invariant under $T$ ($TA_0\subseteq B_0, TB_0\subseteq A_0$). The following theorem is due to Eldred $\it{et~al.}$ \cite{Eldred 2005} 
 \begin{theorem}\label{Eldred}
 Let $(K_1,K_2)$ be a non-empty weakly compact convex pair in a Banach space and suppose $(K_1,K_2)$ has proximal normal structure. Then every relatively nonexpansive mapping $T$ on $A\cup B$ has a best proximity pair in $(K_1,K_2).$
 \end{theorem}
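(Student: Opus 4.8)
The plan is to run the classical minimal-invariant-pair argument via Zorn's lemma. Write $d:=d(K_1,K_2)$ and assume $T$ is cyclic on $K_1\cup K_2$. First I would consider the family $\mathcal{F}$ of all non-empty weakly compact convex pairs $(C,D)$ with $C\subseteq K_1$, $D\subseteq K_2$ that are cyclically invariant under $T$ (that is, $TC\subseteq D$ and $TD\subseteq C$) and satisfy $d(C,D)=d$. Since $(K_1,K_2)\in\mathcal{F}$, the family is non-empty. Ordering $\mathcal{F}$ by reverse inclusion, every chain admits an upper bound: the coordinatewise intersection of a decreasing chain of non-empty weakly compact convex sets is again non-empty (finite intersection property), weakly compact, convex and $T$-invariant, and the distance condition persists because the nested non-empty sets of proximal pairs $\{(x,y)\in C_\alpha\times D_\alpha:\|x-y\|=d\}$ have non-empty intersection. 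Zorn's lemma then yields a minimal element $(V_1,V_2)$.

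Next I would extract structure from minimality. As $T$ is relatively nonexpansive, the proximinal cores $((V_1)_0,(V_2)_0)$ form a non-empty $T$-invariant pair with $d((V_1)_0,(V_2)_0)=d$, so minimality forces $(V_1,V_2)=((V_1)_0,(V_2)_0)$; hence $(V_1,V_2)$ is proximinal. I would likewise prove the convex-hull identities $\overline{\mathrm{co}}\,T(V_2)=V_1$ and $\overline{\mathrm{co}}\,T(V_1)=V_2$: the pair $(\overline{\mathrm{co}}\,T(V_2),\overline{\mathrm{co}}\,T(V_1))$ lies inside $(V_1,V_2)$, is $T$-invariant because $\overline{\mathrm{co}}\,T(V_2)\subseteq V_1$ gives $T(\overline{\mathrm{co}}\,T(V_2))\subseteq T(V_1)\subseteq\overline{\mathrm{co}}\,T(V_1)$, and realizes distance $d$ by applying relative nonexpansiveness to a proximal pair of $(V_1,V_2)$; minimality again forces equality. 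These identities are exactly what makes the center sets below invariant.

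The heart of the argument is the claim $\delta(V_1,V_2)=d$. Granting it, for any $x\in V_1$ one has $d\le\|x-Tx\|\le\delta(V_1,V_2)=d$ and symmetrically $\|Tx-T^2x\|=d$, so $(x,Tx)$ is a best proximity pair. To prove the claim I argue by contradiction: suppose $r:=\delta(V_1,V_2)>d$. Proximal normal structure, applied to the closed bounded convex proximinal pair $(V_1,V_2)$ with $\delta(V_1,V_2)>d(V_1,V_2)$, furnishes $p\in V_1$, $q\in V_2$ with $\delta(p,V_2)<r$ and $\delta(q,V_1)<r$. Fixing $r_1$ strictly between $\max\{\delta(p,V_2),\delta(q,V_1)\}$ and $r$, I would form the non-empty closed convex center sets
\[
C_1=\{x\in V_1:\delta(x,V_2)\le r_1\},\qquad C_2=\{y\in V_2:\delta(y,V_1)\le r_1\},
\]
which are weakly compact and proper, since $\sup_{x\in V_1}\delta(x,V_2)=r>r_1$ gives $C_1\neq V_1$. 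Using $\overline{\mathrm{co}}\,T(V_2)=V_1$ and the convexity of $u\mapsto\|Tx-u\|$, for $x\in C_1$ one obtains $\delta(Tx,V_1)=\delta(Tx,T(V_2))=\sup_{v\in V_2}\|Tx-Tv\|\le\delta(x,V_2)\le r_1$, so $T(C_1)\subseteq C_2$ and, symmetrically, $T(C_2)\subseteq C_1$.

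The step I expect to be the main obstacle is confirming that this strictly smaller invariant pair is still admissible, that is, that $d(C_1,C_2)=d$: the inclusion $(C_1,C_2)\subseteq(V_1,V_2)$ only yields $d(C_1,C_2)\ge d$, and restricting to near-center points could a priori enlarge the gap. I would resolve this by reapplying the two devices already used on $(V_1,V_2)$ to the pair $(C_1,C_2)$, namely passing to the proximinal cores $((C_1)_0,(C_2)_0)$ and to the convex-hull images $\overline{\mathrm{co}}\,T(C_2)$, $\overline{\mathrm{co}}\,T(C_1)$, which remain non-empty and $T$-invariant inside $(C_1,C_2)$, so as to produce a non-empty $T$-invariant subpair realizing distance exactly $d$. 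Such a pair lies in $\mathcal{F}$ and is strictly contained in $(V_1,V_2)$, contradicting minimality. This contradiction establishes $\delta(V_1,V_2)=d$ and hence the existence of a best proximity pair in $(K_1,K_2)$.
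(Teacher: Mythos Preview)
The paper does not give its own proof of this theorem; it is quoted from Eldred--Kirk--Veeramani and then generalized in Theorem~\ref{first Theorem}, whose proof is the closest analogue available for comparison. Your overall architecture (Zorn's lemma, minimal invariant pair, convex-hull identities $V_i=\overline{\mathrm{co}}\,T(V_j)$, Chebyshev-center sets) matches both the original argument and the paper's proof of Theorem~\ref{first Theorem}.

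There is, however, a genuine gap precisely where you flagged it. Your proposed fix for $d(C_1,C_2)=d$ does not work: passing to the proximinal cores $((C_1)_0,(C_2)_0)$ presupposes that some pair in $C_1\times C_2$ already realizes the distance $d$, which is exactly what is in question; and the convex-hull images $\overline{\mathrm{co}}\,T(C_2)\subseteq C_1$, $\overline{\mathrm{co}}\,T(C_1)\subseteq C_2$ inherit only $d(\,\cdot\,,\,\cdot\,)\ge d(C_1,C_2)\ge d$, never producing a proximal pair from nothing. So the argument is circular as written.

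The standard resolution, which you are missing, is built into the definition of proximal normal structure as used by Eldred--Kirk--Veeramani: the nondiametral points $(p,q)$ furnished by the hypothesis may be taken to satisfy $\|p-q\|=d(V_1,V_2)=d$. With that, $p\in C_1$, $q\in C_2$ and $\|p-q\|=d$ give $d(C_1,C_2)=d$ at once, and proximinality of $(C_1,C_2)$ follows similarly. The paper's own proof of Theorem~\ref{first Theorem} handles the same obstacle differently, exploiting the extra \emph{sharp proximinal} hypothesis: with unique proximal companions $x_1',y_1'$ one checks that $\frac{x_1+y_1'}{2}\in L_1$, $\frac{x_1'+y_1}{2}\in L_2$ and that these two points are at distance $d(A,B)$, so the center pair realizes the correct gap. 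You have neither device available in your write-up, and without one of them the contradiction with minimality cannot be closed.
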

 
 The main tool to prove the same is to use the geometrical notion called ``proximal normal structure" on $A_0\cup B_0$. Later many authors established the existence of a best proximity pair for relatively nonexpansive mappings in different settings using variants of geometry (\cite{Espinola 2008},\cite{Moosa 2014},\cite{Raju 2011},\cite{Suzuki 2009}). In \cite{Moosa 2017}, Moosa introduced pointwise relatively nonexpansive mappings involving orbits and therein proved the existence of a best proximity pair for such a class of mappings. Recently, in 2018, Kirk and Shahzad discussed the existence of a best proximity pair for relatively nonexpansive mappings and therein they raised the question ``can the assumption that $T$ is relatively nonexpansive in Theorem \ref{Eldred} be replaced by the
assumption that $T$ is relatively nonexpansive wrt orbits?" $T$ is said to be relatively nonexpansively mappings wrt orbits if $\|Tx-Ty\|\leq r_x\left(\mathcal{O}(y)\right)=\delta(x,\{y,Ty,T^2y,...\}).$ Using the following example, we can conclude that the answer is negative for the above open problem.
\begin{example}
 Let $A=\{x\in \mathbb{R}:-2\leq x \leq -1\}, B=\{x\in \mathbb{R}:1\leq x \leq 2\}.$ Define
 
 \[
  T(x) = 
  \begin{cases}
    -x, & \text{if } x \in A \\
    -1-\frac{x}{2}, & \text{if } x\in B.
  \end{cases}
 \]
Let $y\in B.$ For any $n,~T^{2n}y=1+\frac{2^{n-1}-1}{2^{n-1}}+\frac{y}{2^n}=2-\frac{1}{2^{n-1}}+\frac{y}{2^n}$ and $T^{2n+1}y=-1-\frac{T^{2n}y}{2}=-2+\frac{1}{2^n}-\frac{y}{2^{n+1}}.$ Now, for any $x\in A, y\in B,~\left\|Tx-Ty\right\|=\left|(-x)-\left(-1-\frac{y}{2}\right)\right|\leq 2-x=r_x\left(\mathcal{O}(y)\right).$
 \end{example} 
 It is to be observed that a cyclic map $T$ on $A\cup B$ that satisfies $\|Tx-Ty\|\leq r_x\left(\mathcal{O}(y)\right)$ does not guarantee $A_0\cup B_0$ is cyclically invariant under $T.$ Hence, it is not reasonable to expect the existence of a best proximity pair for such a map $T.$ To overcome this, we redefine the relatively orbital nonexpansive mappings. For $x\in A\cup B,$ we denote $\{T^{2n}x:n\in \mathbb{N}\cup \{0\}\}$ by $\mathcal{O}^2(x)$.
 \begin{definition}
  Let $A,B$ be two non-empty subsets of a Banach space $X.$ A cyclic map $T:A\cup B\to A\cup B$ is said to be a relatively orbital nonexpansive mapping if
 \begin{itemize}
 \item[(i)] $\|Tx-Ty\|=d(A,B)$ if $\|x-y\|=d(A,B)$ for $x\in A, y\in B.$
 \item[(ii)]  for all $x\in A, y\in B,~\|Tx-Ty\|\leq \min \{r_x\left(\mathcal{O}^2(y)\right), r_y\left(\mathcal{O}^2(x)\right)\}$.
 \end{itemize} 
 \end{definition}
 It is worth mentioning that relatively orbital nonexpanive mapping is not necessarily relatively nonexpansive.
 \begin{example}\label{Example2}
 Let $A=\{(0,x)\in \mathbb{R}^2:0\leq x\leq 1\}, B=\{(1,y)\in \mathbb{R}^2:0\leq y\leq 1\}$ and $T:A\cup B \to A\cup B$ be defined by
 
 \[
  x\in A,~ T(x)=
   \begin{cases}
   (1,\frac{x}{4})  & \text{if}~  x\geq \frac{1}{2};\\
   (1,\frac{x}{2})  & \text{if}~  x< \frac{1}{2}.
  \end{cases} 
 \]
 
 \[
   y\in B,~T(y)=
   \begin{cases}
   (0,\frac{y}{4})  & \text{if}~  y\geq \frac{1}{2};\\
   (0,\frac{y}{2})  & \text{if}~  y< \frac{1}{2}.
  \end{cases} 
 \]
 
 We see that $T$ is not relatively nonexpansive but relatively orbital nonexpansive mapping.
  \end{example}
  Let $(A, B)$  be a non-empty sharp proximinal pair in Banach space and $T$ be a relatively orbital nonexpansive mapping on $A\cup B.$ Then it is easy to see that $(A_0,B_0)$ is cyclically invariant under $T$ and $Tx'=(Tx)'.$ We say that  $(A,B)$  is said to satisfy the weak best proximity pair property (WBPP) if every relatively orbital nonexpansive mapping on $A\cup B$ has a best proximity pair.
  The following theorem ensures that every non-empty weakly compact convex pair of subsets of a strictly convex Banach space satisfying the WBPP. The following theorem is in a way different than Theorem 2.6 of \cite{Moosa 2017}. For the sake the completeness, we prove the same here. 
  \begin{theorem}\label{first Theorem}
Let $A,B$ be two non-empty weakly compact convex substes of a Banach space $X$. If $(A,B)$ is a sharp proximinal pair having weak proximal normal structure, then $(A, B)$ has WBPP.
\end{theorem}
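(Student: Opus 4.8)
The plan is to mimic the Zorn's lemma argument used in the original proof of Theorem~\ref{Eldred}, but to track the extra orbital structure carefully so that relatively orbital nonexpansiveness can be exploited in place of relative nonexpansiveness. First I would let $T$ be a relatively orbital nonexpansive map on $A\cup B$ and pass to the proximinal pair $(A_0,B_0)$, which by the remark preceding the statement is cyclically invariant under $T$, sharp proximinal, and satisfies $Tx'=(Tx)'$; since $(A,B)$ is weakly compact convex and sharp proximinal, $(A_0,B_0)$ inherits these properties and is non-empty. The goal is to locate a point $x$ with $\|x-Tx\|=d(A,B)$, equivalently a fixed point of $T$ on $A_0\cup B_0$ in the sense $Tx=x'$.

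Next I would set up a minimality argument. Consider the collection $\mathcal{F}$ of all non-empty weakly compact convex pairs $(K_1,K_2)\subseteq(A_0,B_0)$ that are proximinal, $T$-invariant (in the cyclic sense $TK_1\subseteq K_2$, $TK_2\subseteq K_1$), and satisfy $d(K_1,K_2)=d(A,B)$. This family is non-empty (it contains $(A_0,B_0)$), and ordered by reverse inclusion it is closed under chains by weak compactness; so by Zorn's lemma there is a minimal such pair $(K_1,K_2)$. The standard device is to replace $(K_1,K_2)$ by the convex closed hull of $T$-orbits together with the associated proximinal points and show minimality forces $K_2=\overline{\mathrm{co}}(TK_1)$, $K_1=\overline{\mathrm{co}}(TK_2)$, and $K_1'=K_2$. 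If $\delta(K_1,K_2)=d(K_1,K_2)=d(A,B)$, then $(K_1,K_2)$ is a singleton pair (up to the parallel structure) and any $x\in K_1$ gives the best proximity pair, so we are done.

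The crux is the remaining case $\delta(K_1,K_2)>d(K_1,K_2)$. Here I would invoke weak proximal normal structure to get $(u,v)\in(K_1,K_2)$ with $\delta(u,K_2)<\delta(K_1,K_2)$ and $\delta(v,K_1)<\delta(K_1,K_2)$, and then form the level set
\begin{equation*}
L_1=\{x\in K_1:\delta(x,K_2)\le r\},\qquad L_2=\{y\in K_2:\delta(y,K_1)\le r\},
\end{equation*}
where $r=\max\{\delta(u,K_2),\delta(v,K_1)\}<\delta(K_1,K_2)$. These are non-empty, closed, convex, weakly compact, and proximinal with $d(L_1,L_2)=d(A,B)$. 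The key step --- and the main obstacle --- is to verify that $(L_1,L_2)$ is again $T$-invariant, which strictly contradicts minimality and finishes the proof. This is exactly where relative nonexpansiveness is classically used and where the orbital hypothesis must now do the work: for $x\in L_1$ one estimates $\delta(Tx,K_1)$ by bounding $\|Tx-Ty\|$ for $y$ ranging over $K_2$. The difficulty is that condition~(ii) only controls $\|Tx-Ty\|$ by $r_x(\mathcal{O}^2(y))$ and $r_y(\mathcal{O}^2(x))$ rather than by $\|x-y\|$ directly, so I would use the minimality-forced identity $K_2=\overline{\mathrm{co}}\,\mathcal{O}^2$-type closures to show that $\mathcal{O}^2(y)\subseteq K_2$ and hence $r_x(\mathcal{O}^2(y))\le\delta(x,K_2)\le r$; convexity of the norm then transfers the bound from orbits to all of $TK_2\subseteq\overline{\mathrm{co}}(\cdots)$, yielding $\delta(Tx,L_2)\le r$ and the symmetric estimate. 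Making this orbital-to-global transfer rigorous, and in particular justifying that the relevant orbits land inside $K_2$ and $K_1$, is the technical heart that I expect to require the most care.
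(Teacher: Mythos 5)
Your proposal is correct and follows the paper's overall strategy --- Zorn's lemma to produce a minimal cyclically invariant proximinal pair inside $(A_0,B_0)$, weak proximal normal structure to produce nondiametral points, the level sets $L_1,L_2$, and a contradiction with minimality once $(L_1,L_2)$ is shown to be cyclically invariant --- but you implement the crucial invariance step by a genuinely different device. The paper fixes $x\in L_1$, uses the orbital condition to get $T(F_2)\subseteq B(Tx;m)\cap F_1$ and $T(F_1)\subseteq B(Tx';m)\cap F_2$, observes that this ball-intersection pair again belongs to the family $\mathscr{F}$, and invokes minimality (for each such $x$) to conclude $F_1\subseteq B(Tx;m)$, i.e. $\delta(Tx,F_1)\le m$, so $Tx\in L_2$. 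You instead invoke minimality once, for the hull pair $\left(\overline{\mathrm{co}}(TK_2),\overline{\mathrm{co}}(TK_1)\right)$, to force $K_1=\overline{\mathrm{co}}(TK_2)$ and $K_2=\overline{\mathrm{co}}(TK_1)$, and then let convexity of closed balls do the transfer: $TK_2\subseteq B(Tx;r)$ yields $K_1=\overline{\mathrm{co}}(TK_2)\subseteq B(Tx;r)$. Both devices are sound, and both must verify that the auxiliary pair fed to minimality is proximinal and has distance $d(A,B)$: in your version this requires knowing that the proximal map of a convex sharp proximinal pair is affine (uniqueness plus convexity gives $(\lambda z+(1-\lambda)w)'=\lambda z'+(1-\lambda)w'$) together with a weak-compactness argument to handle points of the closed hull, while in the paper's version one needs $\|z-Tx\|\le m$ to force $\|z'-(Tx)'\|\le m$; your route makes the role of convexity more transparent and uses minimality only once, at the price of this extra lemma on the proximal map. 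Two small corrections to your write-up: the inclusion $\mathcal{O}^2(y)\subseteq K_2$ for $y\in K_2$ follows from cyclic invariance of $(K_1,K_2)$ alone and needs no minimality (minimality is needed only to pass from a bound on $TK_2$ to a bound on all of $K_1$); and the estimate you obtain should read $\delta(Tx,K_1)\le r$, which together with $Tx\in K_2$ says $Tx\in L_2$, rather than $\delta(Tx,L_2)\le r$.
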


 \begin{proof}
 Let $\mathscr{F}$ denote the collection of non-empty closed bounded convex proximinal pair $(E_1, E_2$) of subsets of $(A_0, B_0)$ with $(E_1, E_2)$ cyclically invariant under $T$ and $d(E_1,E_2)=d(A,B).$ 
$\mathscr{F}\neq \emptyset,$ since $(A_0,B_0)\in \mathscr{F}.$ By Zorn's Lemma $\mathscr{F}$ has a minimal element under the set inclusion order $``\subseteq"$, say, $(F_1,F_2).$ If $(F_1,F_2)$ is a singleton pair, we have $\delta(F_1,F_2)=d(A,B),$ i.e., $T$ has a best proximity pair. Suppose $(F_1,F_2)$ is not singleton. By weak proximal normal structure, there exist $(x_1,y_1)\in (F_1,F_2)$ such that $ m_1 = \delta(x_1,F_2)<\delta(F_1,F_2);~
 m_2 = \delta(y_1,F_1)<\delta(F_1,F_2).$ Set $m=\max\{m_1,m_2\}$. Define
 \begin{eqnarray*}
 L_1&=&\{x\in F_1:\delta(x,F_2)\leq m\}\\
 L_2&=&\{y\in F_2:\delta(y,F_1)\leq m\}.
 \end{eqnarray*}
 $L_1\neq \emptyset, L_2\neq \emptyset,$ since $x_1\in L_1,y_1\in L_2.$ Being closed subset of a weakly compact subset, $ L_1,L_2$ are weakly compact. To see $L_1$ is convex, let $a, b\in L_1.$ For any $\lambda\in [0,1],\\ 
 \delta\left(\lambda a+(1-\lambda)b,F_2\right)\leq \lambda\delta(a,F_2)+(1-\lambda)\delta(b,F_2) \leq \lambda m+(1-\lambda)m=m.$ Hence we can conclude that $(L_1,L_2)$ is a convex pair. Let $v\in F_2.$ Suppose the unique best approximation of an element $z\in A\cup B$ is denoted by $z'$. Then 
 \begin{eqnarray*}
 \left\|\frac{x_1+y_1'}{2}-v\right\| &\leq &\frac{1}{2} \left[\left\|x_1-v\right\|+\left\|y_1'-v\right\|\right] \\ &=& \frac{1}{2}\left[\left\|x_1-v\right\|+\left\|y_1-v'\right\|\right]\\
&\leq & \frac{1}{2}\left[\delta(x_1,F_2)+\delta(y_1,F_1) \right]\\  &\leq & m.
 \end{eqnarray*}

 Since $v\in F_2$ is arbitrary, $\delta\left(\frac{x_1+y_1'}{2},F_2\right)\leq m.$ Hence, $\frac{x_1+y_1'}{2}\in L_1$ Similarly, $\frac{x_1'+y_1}{2}\in L_2.$ Moreover, $\left\|\frac{x_1+y_1'}{2}-\frac{x_1'+y_1}{2}\right\|=d(A,B).$ Hence, $d(L_1,L_2)=d(A,B).$ To see $(L_1,L_2)$ is a proximinal pair, let $x\in L_1.$ Then $x\in F_1$ and hence $x'\in F_2.$ Therefore $\delta(x',F_1)=\delta(x,F_2)\leq m.$ Thus $x'\in L_2.$ It infers $(L_1,L_2)$ is a proximinal pair. Thus, $L_2=\{x'\in F_2:x\in L_1\}.$

 Next, let $x\in L_1, v\in F_2.$ Then, $\left\|Tx-Tv\right\|\leq r_x\left(\mathcal{O}^2(v)\right)=\delta\left(x,\mathcal{O}^2(v)\right)\leq \delta(x,F_2)\leq m.$ It follows that $T(F_2)\subset B\left(Tx;m\right)\cap F_1=F_1'.$ Similarly, $T(F_1)\subset B\left(Tx';m\right)\cap F_2=F_2'.$ Clearly, $(F_1', F_2')\in \mathscr{F}.$ By minimality, $F_1'=F_1, F_2'=F_2.$ Then $F_1\subseteq B(Tx;m)$ and $F_2\subseteq B(Tx';m).$ For any $u\in F_1, \|u-Tx\|\leq m,$ hence, $\delta(Tx,F_1)\leq m.$ Therefore, $Tx\in L_2.$ Hence, $T(L_1)\subseteq L_2.$
% To see $T(L_2)\subseteq L_1,$ let $y\in L_2, u\in F_1.$ Then 
% \begin{eqnarray*}
% \|y-u\|& \leq & m\\
% \|(y-h)-(u-h)\| &\leq&  m
% \end{eqnarray*}
% 
% 
% Since $(L_1,L_2)$ is a proximinal pair, $y'\in L_1.$ Then $Ty'\in L_2.$ This implies $(Ty)'\in L_2$ and consequently, $Ty+h\in L_2.$ Then
% 
% \begin{eqnarray*}
% \|Ty-(u-h)\|=\|(Ty+h)-u\|=Ty'-u=\|T(y)'-u\|\leq m.
% \end{eqnarray*}
 Further, if $y\in L_2,$ then $y'\in L_1.$ This implies $Ty'=(Ty)'\in L_2.$ Thus $Ty\in L_1.$
As $y\in L_2$ is arbitrary, we have $T(L_2)\subseteq L_1.$ Hence, $(L_1,L_2)\in \mathscr{F}.$ For $x\in L_1, y\in L_2, \left\|x-y\right\|\leq \delta(x,F_2)\leq m<\delta(F_1,F_2).$ This infers that $\delta(L_1,L_2)<\delta(F_1,F_2).$ This contradicts the minimality of $(F_1,F_2).$
\end{proof}

 Let $T$ be a cyclic map on $A\cup B$. We say that the pair $(A,B)$ has a proximinal nondiametral pair if there exists $(x,y)\in A\times B$ such that $\max\{\delta(x,B),\delta(y,A)\}<\delta(A, B)$ whenever $d(A,B)<\delta(A,B).$ 
% Also, a pair of subsets $(H_1,H_2)$ of $(A,B)$ is said to be cyclically invariant under $T$ if $TH_1 \subseteq H_2$ and $TH_2 \subseteq H_1.$
%It is easy to see from the above proof that if $T$ is relatively orbital nonexpansive mapping on $A\cup B,$ where $A$ and $B$ are weakly compact convex subsets of a Banach space and $T$ has a closed convex minimal cyclically invariant pair containing then $T$ has a best proximity pair.
 A similar technique can be used to obtain the following:
 
 \begin{theorem}\label{Theorem 3.3}
 Let $(A, B)$ be a non-empty closed bounded convex proximinal pair of subsets of a Banach space and let $T$ be a relatively orbital nonexpansive mapping on $A\cup B.$ If $T$ has a nonempty closed bounded convex minimal cyclically invariant pair $(A,B)$ having a nondiametral pair then $T$ has a best proximity pair.
 \end{theorem}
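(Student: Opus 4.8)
The plan is to argue by contradiction in the spirit of the proof of Theorem~\ref{first Theorem}, making two substitutions: the extraction of a minimal pair by Zorn's lemma is unnecessary, since minimality of $(A,B)$ is now assumed outright, and the appeal to weak proximal normal structure is replaced by the standing hypothesis that $(A,B)$ carries a nondiametral pair. First I would dispose of the degenerate case: if $\delta(A,B)=d(A,B)$, then for every $u\in A$ one has $d(A,B)\le\|u-Tu\|\le\delta(A,B)=d(A,B)$, and likewise on $B$, so any $(u,v)\in A\times B$ is already a best proximity pair. Hence I may assume $d(A,B)<\delta(A,B)$, and the nondiametral hypothesis then supplies a pair $(x,y)\in A\times B$ with $m:=\max\{\delta(x,B),\delta(y,A)\}<\delta(A,B)$.

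Next I would introduce the level sets $L_1=\{u\in A:\delta(u,B)\le m\}$ and $L_2=\{v\in B:\delta(v,A)\le m\}$, verbatim as in Theorem~\ref{first Theorem}. These are nonempty (they contain $x$ and $y$ respectively), closed and bounded as subsets of $A,B$, and convex by convexity of $u\mapsto\delta(u,B)$. Writing $u'$ for the nearest point of $u$ as before, the identity $\delta(u',A)=\delta(u,B)$ shows that $u\in L_1$ forces $u'\in L_2$, so $(L_1,L_2)$ is a proximinal pair with $L_2=\{u':u\in L_1\}$; and the midpoint computation $\left\|\frac{x+y'}{2}-\frac{x'+y}{2}\right\|=d(A,B)$, identical to the one in the earlier proof, yields $d(L_1,L_2)=d(A,B)$. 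All of this is the same bookkeeping, now performed relative to $(A,B)$ itself rather than to a Zorn-minimal element.

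The crux is the cyclic invariance of $(L_1,L_2)$, where I would reuse the ball-intersection device. Fixing $x\in L_1$, for every $v\in B$ relative orbital nonexpansiveness gives $\|Tx-Tv\|\le r_x(\mathcal{O}^2(v))=\delta(x,\mathcal{O}^2(v))\le\delta(x,B)\le m$, so $T(B)\subseteq B(Tx;m)\cap A=:A'$ and, symmetrically, $T(A)\subseteq B(Tx';m)\cap B=:B'$. The pair $(A',B')$ is again nonempty, closed, bounded, convex and proximinal, cyclically invariant under $T$ with $d(A',B')=d(A,B)$, and contained in $(A,B)$; by the assumed minimality $A'=A$ and $B'=B$. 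Thus $A\subseteq B(Tx;m)$, i.e. $\delta(Tx,A)\le m$, so $Tx\in L_2$ and $T(L_1)\subseteq L_2$; the reverse inclusion $T(L_2)\subseteq L_1$ follows from $Ty'=(Ty)'$ exactly as before.

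Finally, $(L_1,L_2)$ is a nonempty closed bounded convex proximinal pair, cyclically invariant under $T$, with $d(L_1,L_2)=d(A,B)$ and $(L_1,L_2)\subseteq(A,B)$, so minimality forces $(L_1,L_2)=(A,B)$. But for $u\in L_1$ and $v\in L_2$ one has $\|u-v\|\le\delta(u,B)\le m$, whence $\delta(L_1,L_2)\le m<\delta(A,B)=\delta(L_1,L_2)$, a contradiction. I expect the main obstacle to be the cyclic-invariance step: one must confirm that the intermediate pair $(A',B')$ genuinely belongs to the class over which $(A,B)$ is minimal---in particular that it is nonempty, proximinal, and realizes the distance $d(A,B)$---since it is exactly this secondary minimality argument, not any direct norm estimate, that upgrades the one-sided bound $\|Tx-Tv\|\le m$ into the inclusion $T(L_1)\subseteq L_2$.
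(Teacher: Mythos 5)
Your proposal is correct and is essentially the paper's own argument: the paper gives no separate proof of Theorem~\ref{Theorem 3.3}, stating only that ``a similar technique'' to the proof of Theorem~\ref{first Theorem} applies, and your adaptation --- assumed minimality in place of Zorn's lemma, the nondiametral pair in place of weak proximal normal structure, plus the same level sets $(L_1,L_2)$, midpoint/proximinality bookkeeping, and ball-intersection minimality device --- is exactly that technique, with the degenerate case $\delta(A,B)=d(A,B)$ handled correctly. You even flag the same point the paper glosses over (that the intermediate pair $(A',B')$ lies in the class over which minimality is invoked, which, like the identity $\delta(u',A)=\delta(u,B)$ and $Tu'=(Tu)'$, implicitly uses the sharp proximinal structure), so your write-up is at least as careful as the original.
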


 \begin{example}
 Let $A, B$ and $T$ as in the Example \ref{Example2}. It is easy to see that $\left((0,0),(1,0)\right)$ is a best proximity pair.
 \end{example}

 %%%%%%%%%%%% SECTION 3. CHARACTERIZATION %%%%%%

 \section{Characterization of weak proximal normal structure}

  Let $(A,B)$ be a bounded convex proximinal pair of a Banach space $X.$ A non-constant pair of sequences $\left(\{x_n\},\{y_n\}\right)$ of $(A,B)$ is said to be a proximinal diametral sequence if $\|x_n-y_n\|=d(A,B)$ for every $n\in \mathbb{N}$ and $\delta(\{x_n\},\{y_n\})= \displaystyle \lim_{n\to \infty}d\left(x_{n+1},\mbox{co}\left(\{y_1,y_2,...,y_n\}\right)\right) =\lim_{n\to \infty}d\left(y_{n+1},\mbox{co}\left(\{x_1,x_2,...,x_n\}\right)\right).$ It is to be observed that if $d(A,B)=0,$ then the proximinal diametral sequence turns out to be a diametral sequence in $A\cap B$ in the sense of Brodskii and Milman (\cite{Milman 1948}). Using a similar argument employed in the proof of Theorem 2.5 (\cite{Eldred 2005}) one can obtain the following: 
 \begin{theorem}\label{Theorem3.1}
 A bounded convex pair $(A,B)$ of a Banach space $X$ has proximal normal structure if and only if it does not contain a proximinal diametral sequence.
 \end{theorem}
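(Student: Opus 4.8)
The plan is to prove the biconditional in Theorem \ref{Theorem3.1} by establishing each implication separately, adapting the characterization of ordinary normal structure via diametral sequences (Theorem 2.5 of \cite{Eldred 2005}) to the proximal setting. The key definitional bridge is that a proximal diametral sequence as defined in the introduction (following \cite{Moosa 2017}, via a $\max$ of the two limit quantities equalling $\delta(\{x_n\},\{y_n\})$) and the \emph{proximinal} diametral sequence introduced just before the theorem (via equality of both limits) are essentially interchangeable for the purposes of this argument; I would note this at the outset, since Proposition 3.2 of \cite{Moosa 2017} already characterizes proximal normal structure by absence of proximal diametral sequences, and the content here is the refinement to the symmetric ``proximinal'' version.

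For the contrapositive of the forward direction, I would assume $(A,B)$ contains a proximinal diametral sequence $(\{x_n\},\{y_n\})$ and show $(A,B)$ fails proximal normal structure. First I would pass to the closed convex proximinal pair $(H_1,H_2)$ generated by the sequences, specifically $H_1 = \overline{\mathrm{co}}(\{x_n\})$ and $H_2 = \overline{\mathrm{co}}(\{y_n\})$, checking that $d(H_1,H_2)=d(A,B)$ and that $\delta(H_1,H_2)=\delta(\{x_n\},\{y_n\})>d(A,B)$ using that the sequence is non-constant. The crux is to show that \emph{no} point of $H_1$ is nondiametral: for arbitrary $x\in H_1$ I would approximate $x$ by a finite convex combination of the $x_k$, and use the defining limit condition $\delta(\{x_n\},\{y_n\}) = \lim_n d(x_{n+1},\mathrm{co}(\{y_1,\dots,y_n\}))$ together with convexity of the norm to force $\delta(x,H_2)=\delta(H_1,H_2)$; the symmetric argument handles $H_2$. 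This denies the existence of the required nondiametral pair $(x,y)$, so proximal normal structure fails.

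For the reverse direction I would again argue contrapositively: assuming $(A,B)$ lacks proximal normal structure, there is a closed bounded convex proximinal pair $(H_1,H_2)\subseteq(A,B)$ with $d(H_1,H_2)=d(A,B)$ and $\delta(H_1,H_2)>d(H_1,H_2)$ in which every point is diametral, i.e. for all $x\in H_1$, $\delta(x,H_2)=\delta(H_1,H_2)$ and symmetrically. I would then build the proximinal diametral sequence inductively: pick $x_1\in H_1$ arbitrary with its proximal partner $y_1=x_1'$, and having chosen $x_1,\dots,x_n$ and $y_1,\dots,y_n$, choose $x_{n+1}\in H_1$ so that $d(x_{n+1},\mathrm{co}(\{y_1,\dots,y_n\}))$ is within $1/n$ of $\delta(H_1,H_2)$ — such a choice is possible precisely because the finite convex hull is compact (or weakly compact) and every point of $H_1$ is diametral, forcing the supremum over $H_1$ of that distance to equal $\delta(H_1,H_2)$. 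Taking $y_{n+1}=x_{n+1}'$ and verifying the symmetric estimate gives the required equalities of both limits.

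The main obstacle I anticipate is the inductive selection step in the reverse direction, where one must guarantee that a single point $x_{n+1}\in H_1$ can be found that is simultaneously far (in the sense of the convex-hull distance) from both accumulated hulls $\mathrm{co}(\{y_1,\dots,y_n\})$ and, via its partner, close to realizing $\delta$ against $\mathrm{co}(\{x_1,\dots,x_n\})$; balancing these two requirements while keeping $\|x_{n+1}-y_{n+1}\|=d(A,B)$ is delicate, and it is exactly here that the sharp-proximinal structure (uniqueness of $x'$ and the identity $\delta(x,H_2)=\delta(x',H_1)$ that was used in the proof of Theorem \ref{first Theorem}) must be invoked to transfer distance estimates between the two sides. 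I expect the diametrality hypothesis — that $\delta(x,H_2)=\delta(H_1,H_2)$ for \emph{every} $x$ — to supply precisely the uniformity needed to push the supremum to $\delta(H_1,H_2)$ at each stage, so that the construction does not stall.
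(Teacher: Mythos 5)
Your reverse direction fails at its first step: the negation of proximal normal structure is weaker than what you assume. PNS requires, for each admissible test pair $(H_1,H_2)$, a \emph{single} pair $(x,y)\in H_1\times H_2$ with $\delta(x,H_2)<\delta(H_1,H_2)$ \emph{and} $\delta(y,H_1)<\delta(H_1,H_2)$; negating it gives a test pair such that for every $(x,y)$ at least one of these two quantities equals $\delta(H_1,H_2)$. Since any nondiametral $x_0\in H_1$ could be paired with any nondiametral $y_0\in H_2$, this is equivalent to: \emph{either} every point of $H_1$ is diametral \emph{or} every point of $H_2$ is diametral --- one-sided, not ``and symmetrically'' as you write. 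The distinction is fatal here because the proximinal diametral sequence of this section requires \emph{both} limits $\lim_n d\left(y_{n+1},\mathrm{co}(\{x_1,\dots,x_n\})\right)$ and $\lim_n d\left(x_{n+1},\mathrm{co}(\{y_1,\dots,y_n\})\right)$ to equal $\delta(\{x_n\},\{y_n\})$, whereas Gabeleh's proximal diametral sequence (Proposition 3.2 of \cite{Moosa 2017}, quoted in the introduction) asks only for the maximum of the two. Writing $\delta=\delta(H_1,H_2)$ and assuming one-sided diametrality, say on $H_1$, the Brodskii--Milman construction produces only one limit: the centroid $c_n=\frac{1}{n}\sum_{i\leq n}x_i$ lies in $H_1$, hence is diametral, so one can choose $y_{n+1}\in H_2$ with $\|c_n-y_{n+1}\|\geq\delta-\frac{1}{n^2}$, and the standard quantitative lemma (if $\|y-c_n\|\geq\delta-\epsilon$ and $\|y-x_i\|\leq\delta$ for $i\leq n$, then $d(y,\mathrm{co}(\{x_1,\dots,x_n\}))\geq\delta-n\epsilon$) gives $d(y_{n+1},\mathrm{co}(\{x_1,\dots,x_n\}))\to\delta$. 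But to control the other limit one needs points of $H_1$ far from the centroids $\frac{1}{n}\sum_{i\leq n}y_i$, i.e.\ diametrality of points of $H_2$ relative to $H_1$, which is exactly what the negation does not provide; and routing through proximal partners loses $2d(A,B)$, since $\|x_{n+1}-v\|\geq\|y_{n+1}-u\|-2d(A,B)$ for corresponding convex combinations $u,v$. Your justification of the selection step is also wrong on its own terms: diametrality of the points of $H_1$ plus compactness of a finite-dimensional hull does not make $\sup_{x\in H_1}d(x,\mathrm{co}(\{y_1,\dots,y_n\}))$ equal to $\delta$ --- upgrading ``far from one point'' to ``far from a whole hull'' is precisely what the centroid lemma is for --- and the sharp-proximinal identity $\delta(x,H_2)=\delta(x',H_1)$ you plan to invoke is not among the hypotheses of this theorem.

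Two smaller points on the forward direction, which otherwise follows the standard template. First, you assert that $\left(\overline{\mathrm{co}}(\{x_n\}),\overline{\mathrm{co}}(\{y_n\})\right)$ is proximinal: this is clear on the algebraic hulls (pair $\sum\lambda_ix_i$ with $\sum\lambda_iy_i$; the two inequalities force their distance to be $d(A,B)$), but on the closures it requires attainment of the distance, which is not automatic in a general Banach space; since PNS quantifies only over \emph{proximinal} closed test pairs, this step needs an argument (it is immediate under weak compactness, which the theorem does not assume). Second, you have the two limit conditions crossed: diametrality of the points of $\overline{\mathrm{co}}(\{x_n\})$ follows from the condition on $d(y_{n+1},\mathrm{co}(\{x_1,\dots,x_n\}))$, not from the one you quote; this is harmless since both conditions are assumed, but it is the same left/right confusion that sinks your reverse direction. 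For calibration: the paper gives no proof of this theorem, only the remark that it follows by the argument of Theorem 2.5 of \cite{Eldred 2005}; your forward direction is essentially that argument, but your reverse direction is not, and as written it does not prove the statement.
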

%  \begin{itemize}
%  \item[(i)] $\|x_n-y_n\|=d(A,B)$ for every $n\in \mathbb{N}.$
%  \item[(ii)] $\displaystyle \lim_{n\to \infty}d\left(x_{n+1},\mbox{co}\left(\{y_1,y_2,...,y_n\}\right)\right)=\delta(A,B)=\lim_{n\to \infty}d\left(y_{n+1},\mbox{co}\left(\{x_1,x_2,...,x_n\}\right)\right)$
%  \end{itemize}
 %\end{definition}

  Let $(A, B)$ be a non-empty weakly compact convex sharp proximinal pair of subsets of a Banach space having WBPP. Suppose $(A, B)$ does not have proximal weak normal structure. Then by  Theorem \ref{Theorem3.1}, $(A, B)$ has a proximinal diametral sequence, say, $\left(\{x_n\},\{y_n\}\right).$ Consequently, $\displaystyle\lim_{n\to \infty}d\left(x_{n+1}, \mbox{co}\left(\{y_1,y_2,...,y_n\}\right)\right)=\delta(\{x_n\},\{y_n\})=\lim_{n\to \infty}d\left(y_{n+1}, \mbox{co}\left(\{x_1,x_2,...,x_n\}\right)\right).$
 
 Since, $(A, B)$ is weakly compact, there exists a subsequence $\left(\{x_{n_{k}}\},\{y_{n_{k}}\}\right)$ of $\left(\{x_n\},\{y_n\}\right)$ which is weakly convergent. It is easy to see that the sequence $\left(\{x_{n_{k}}\},\{y_{n_{k}}\}\right)$ is a proximinal diametral subsequence. Hence, without loss of any generality, we may assume that the sequence $\left(\{x_n\},\{y_n\}\right)$ is  proximinal diametral and weakly convergent.
 Now, $H=\overline{\mbox{co}}\left(\{x_1,x_2,...\} \right),K= \overline{\mbox{co}}\left(\{y_1,y_2,...\} \right)$ are weakly compact convex subsets of $A, B$ respectively. Define $T:H\cup K \to H\cup K$ by
 
 \[
  T(x) = 
  \begin{cases}
    y_1, & \text{if} ~x \notin \{x_n:n\in \mathbb{N}\} \\
    y_{n+1}, & \text{if}~ x= x_n ~ \text{for~some}~ n\in \mathbb{N};
  \end{cases}
\]

 \[
  T(y) = 
  \begin{cases}
    x_1, & \text{if} ~y \notin \{y_n:n\in \mathbb{N}\} \\
    x_{n+1}, & \text{if}~ y= y_n ~ \text{for~some}~ n\in \mathbb{N}.
  \end{cases}
\]
 
% We see that $\delta(H,K)\geq \delta\left(\{x_n\},\{y_n\}\right).$ Let $y\in \mbox{co}\left(\{y_1, y_2,...,y_n\}\right).$ Choose $a_i\geq 0, 1\leq i\leq n$ such that $y=\displaystyle\sum_{i=1}^{n}a_iy_i.$ Now, $\|x_{n+1}-y\|\leq \left\|\displaystyle\sum_{i=1}^{n}a_i(x_{n+1}-y_i)\right\|\leq  \displaystyle\sum_{i=1}^{n}a_i\left\|x_{n+1}-y_i\right\|\leq \displaystyle\sum_{i=1}^{n}a_i\delta(\{x_n\},\{y_n\})=\delta(\{x_n\},\{y_n\}).$ Then $\delta\left(x_{n+1},\mbox{co}\left(\{y_1,y_2,..,y_n\}\right)\right)\leq \delta(\{x_n\},\{y_n\}).$ 
% Since, $\left(\{x_n\},\{y_n\}\right)$ is a proximinal diametral sequence, we have $\delta(A,B)= \displaystyle\lim_{n\to \infty}d\left(x_{n+1}, \overline{\mbox{co}}\{y_1,y_2,..,y_n\}\right)$
 Clearly, $\delta(H,K)=\delta(\{x_n\},\{y_n\})$ and $\displaystyle\lim_{n\to \infty}\|x_n-z\|=\delta(H,K)=\lim_{n\to \infty}\|y_n-v\|$ for any $z\in K, v\in H.$ Hence, $r_x\left(\mathcal{O}^2(y)\right)=\delta(H,K)$ for each $x\in H, y\in K.$ Now, $$\|Tx-Ty\|\leq \delta(H,K)=r_x\left(\mathcal{O}^2(y)\right)~~\mbox{for each}~x\in H, y\in K.$$ Also, if $(x, y)\in H\times K$ with $\|x-y\|=d(H, K),$ then $\|Tx-Ty\|=d(H,K).$ Therefore $T$ is a relatively orbital nonexpansive mapping. As $(A, B)$ is a sharp proximinal pair, then so is $(H, K)$ and $T$ does not have any best proximity pair. Thus we have the following: 
 
  \begin{proposition}\label{Proposition3.1}
Let $A,B$ be two non-empty weakly compact convex substes of a Banach space $X$. If $(A,B)$ is a sharp proximinal pair and $(A, B)$ has WBPP, then $(A, B)$ has weak proximal normal structure.
\end{proposition}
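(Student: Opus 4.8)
The plan is to argue by contraposition: I would assume that the weakly compact convex sharp proximinal pair $(A,B)$ fails to have weak proximal normal structure, and then manufacture a relatively orbital nonexpansive self-map of a subpair that admits no best proximity pair, contradicting the WBPP. Since $(A,B)$ has no weak proximal normal structure, Theorem \ref{Theorem3.1} supplies a proximinal diametral sequence $\left(\{x_n\},\{y_n\}\right)$. The first routine step is to invoke weak compactness of $(A,B)$ to extract a weakly convergent subsequence; a short check shows the subsequence is again proximinal diametral, so I may as well take $\left(\{x_n\},\{y_n\}\right)$ itself to be weakly convergent. I then set $H=\overline{\mbox{co}}\left(\{x_n\}\right)$ and $K=\overline{\mbox{co}}\left(\{y_n\}\right)$, which are weakly compact convex subsets of $A$ and $B$, and whose proximity distance $d(H,K)$ equals $d(A,B)$ because the alignment $\|x_n-y_n\|=d(A,B)$ is built into the definition of a proximinal diametral sequence.

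Next I would define the cyclic shift $T$ on $H\cup K$ by $Tx_n=y_{n+1}$, $Ty_n=x_{n+1}$, and $Tx=y_1$, $Ty=x_1$ off the sequences, and verify that $T$ is relatively orbital nonexpansive. The crux here is the orbit-radius computation. Using the two diametral limits together with the fact that every point of $H$ (resp. $K$) is a limit of convex combinations of finitely many $x_i$ (resp. $y_i$), I would show $\lim_{n\to\infty}\|x_n-z\|=\delta(H,K)$ for each $z\in K$ and $\lim_{n\to\infty}\|y_n-v\|=\delta(H,K)$ for each $v\in H$. Because $\mathcal{O}^2(y)$ is a cofinal subsequence of $\{y_n\}$ for every $y\in K$, this forces $r_x\left(\mathcal{O}^2(y)\right)=\delta(H,K)$, and symmetrically for $r_y\left(\mathcal{O}^2(x)\right)$; since $\|Tx-Ty\|\le\delta(H,K)$ holds trivially (as $Tx\in K$ and $Ty\in H$), condition (ii) of the definition is satisfied. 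For condition (i), if $\|x-y\|=d(H,K)$ then sharp proximinality gives $y=x'$, and a case analysis according to whether $x$ lies on the sequence or not shows that $T$ carries such a proximinal pair to another proximinal pair at distance $d(A,B)$.

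The heart of the argument, and the step I expect to require the most care, is showing that $T$ has no best proximity pair. Suppose some $x\in H$ satisfied $\|x-Tx\|=d(H,K)$; by sharp proximinality this means $Tx=x'$. If $x=x_n$ then $x'=y_n$ (again from $\|x_n-y_n\|=d(A,B)$ and uniqueness), yet $Tx_n=y_{n+1}\neq y_n$ since the sequence is non-constant, a contradiction; and if $x\notin\{x_n\}$ then $Tx=y_1=x'$ would force $x=(y_1)'=x_1$ via the sharp proximinal involution $x\mapsto x'$, contradicting $x\notin\{x_n\}$. Hence no $x\in H$, and symmetrically no $y\in K$, can realize the distance $d(H,K)$ under $T$, so $T$ has no best proximity pair. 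Since $(H,K)$ is a weakly compact convex sharp proximinal subpair and $T$ is relatively orbital nonexpansive, this contradicts the assumption that $(A,B)$ has WBPP, completing the contrapositive. The only genuinely delicate points are the passage to a weakly convergent diametral subsequence and the orbit-radius identity $r_x\left(\mathcal{O}^2(y)\right)=\delta(H,K)$; the remainder is bookkeeping with the involution $x\mapsto x'$.
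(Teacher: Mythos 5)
Your proposal follows essentially the same route as the paper's own proof: contraposition, Theorem \ref{Theorem3.1} to produce a proximinal diametral sequence, passage to a weakly convergent subsequence, the hulls $H=\overline{\mbox{co}}\left(\{x_n\}\right)$ and $K=\overline{\mbox{co}}\left(\{y_n\}\right)$, the cyclic shift map $T$, and the orbit-radius identity $r_x\left(\mathcal{O}^2(y)\right)=\delta(H,K)$; indeed you supply the two verifications (condition (i) and the nonexistence of a best proximity pair) that the paper merely asserts.

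There is, however, one concrete flaw in your nonexistence step: the claim $Tx_n=y_{n+1}\neq y_n$ ``since the sequence is non-constant'' does not follow. Non-constancy of the pair of sequences does not exclude consecutive repetitions; for instance, repeating the first pair of terms of a proximinal diametral sequence produces another proximinal diametral sequence (the defining conditions are asymptotic). If $y_{n+1}=y_n$ for some $n$, then by sharpness also $x_{n+1}=x_n$, and then $\|x_n-Tx_n\|=\|x_n-y_{n+1}\|=d(A,B)$ and $\|y_n-Ty_n\|=\|y_n-x_{n+1}\|=d(A,B)$, so $(x_n,y_n)$ \emph{is} a best proximity pair of your map (and repeated terms even make $T$ ill-defined). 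The repair is cheap and stays inside your framework: if $x_{n+1}\in\{x_1,\dots,x_n\}$, say $x_{n+1}=x_m$ with $m\leq n$, then $d\left(x_{n+1},\mbox{co}\left(\{y_1,\dots,y_n\}\right)\right)\leq\|x_m-y_m\|=d(A,B)$, whereas this quantity tends to $\delta\left(\{x_n\},\{y_n\}\right)>d(A,B)$; hence the terms are eventually pairwise distinct, and when you extract your weakly convergent subsequence you may simultaneously arrange pairwise distinct $x$-terms (sharpness, via $y_n=x_n'$, then gives pairwise distinct $y$-terms). After this adjustment your case analysis is valid. It is worth noting that the paper's own write-up skips this point entirely, asserting without proof that $T$ has no best proximity pair, so your attempt is in fact more detailed than the original at exactly the step where care is needed.
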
 
 
 By Theorem \ref{first Theorem} and Proposition \ref{Proposition3.1} we have the following characterization:
 
 \begin{theorem}
Let $A,B$ be two non-empty weakly compact convex substes of a Banach space $X$. If $(A,B)$ is a sharp proximinal pair, then $(A,B)$ has weak proximal normal structure if and only if every relatively orbital nonexpansive mapping $T:A\cup B\to A\cup B$ has a best proximity pair. 
\end{theorem}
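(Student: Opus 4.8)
The plan is to derive this biconditional directly from the two results established above, since together they already cover both implications. The key observation is that the clause ``every relatively orbital nonexpansive mapping $T:A\cup B\to A\cup B$ has a best proximity pair'' is, by definition, exactly the statement that $(A,B)$ satisfies the weak best proximity pair property (WBPP). So the theorem asserts nothing more than the equivalence of weak proximal normal structure and WBPP for a weakly compact convex sharp proximinal pair, and the two named results supply the two directions.

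First I would prove the forward implication. Assuming $(A,B)$ has weak proximal normal structure, I invoke Theorem \ref{first Theorem}: since $(A,B)$ is a non-empty weakly compact convex sharp proximinal pair with weak proximal normal structure, it has WBPP, which is precisely the conclusion that every relatively orbital nonexpansive mapping on $A\cup B$ admits a best proximity pair.

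For the converse I would use Proposition \ref{Proposition3.1}. If every relatively orbital nonexpansive mapping has a best proximity pair, then $(A,B)$ has WBPP by definition; Proposition \ref{Proposition3.1} then yields that the weakly compact convex sharp proximinal pair $(A,B)$ has weak proximal normal structure. Combining the two directions gives the desired equivalence.

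Since all of the analytic work has already been carried out — namely the Zorn's-lemma minimality argument inside Theorem \ref{first Theorem}, and the explicit construction of a relatively orbital nonexpansive map from a proximinal diametral sequence in the discussion preceding Proposition \ref{Proposition3.1} — there is no genuine obstacle at this stage. The only point that needs checking is that the hypotheses of the two cited results coincide exactly with those of the present theorem, which they do, as both require $(A,B)$ to be a non-empty weakly compact convex sharp proximinal pair. Thus the proof reduces to a direct appeal to Theorem \ref{first Theorem} and Proposition \ref{Proposition3.1}.
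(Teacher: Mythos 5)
Your proof is correct and matches the paper exactly: the paper also derives this theorem immediately by combining Theorem \ref{first Theorem} (weak proximal normal structure implies WBPP) with Proposition \ref{Proposition3.1} (WBPP implies weak proximal normal structure), after noting that WBPP is by definition the stated best proximity pair condition.
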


%%%%%%%%%%SECTION 4: POINTWISE ORBITAL CYCLIC CONTRACTION %%%%%%%%%%%%%%%%%%%%%%%%%%%%%%%%%
 \section{Pointwise Cyclic Contraction wrt Orbits}
% In this section, we discuss a pointwise cyclic contraction introduced by Raju {\it{et al.}} \cite{Raju 2011}, viz., pointwise cyclic contraction wrt orbits and provide sufficient conditions that will ensure the existence of best proximity pairs.
 Let $(A, B)$ be a pair of subsets of a normed linear space. A cyclic map $T$ on $A\cup B$ is said to be a proximal pointwise contraction if for any $x\in A,$ there exists $\alpha (x)\in [0,1)$ such that $\|Tx-Ty\|\leq \alpha(x)\|x-y\|$ (\cite{Anuradha 2009}). 
% A cyclic map $T:A\cup B\to A\cup B$ is said to be pointwise cyclic contraction if $\|Tx-Ty\|=d(A,B)$ whenever $\|x-y\|=d(A,B)$ for  $(x,y)\in A\times B$ and for $x\in A,$ there exists $\alpha (x)\in [0,1)$ such that $\|Tx-Ty\|\leq \alpha(x)\|x-y\|+(1-\alpha(x))d(A,B)$ for all $y\in B$ \cite{Raju 2011}.
Later many authors obtained the existence of a best proximity pair for certain types of pointwise cyclic contractions (\cite{Raju 2011}, \cite{Poom 2013}, \cite{Moosa 2018}).  Now we introduce the notion of pointwise cyclic contraction wrt orbits and prove the existence of a best proximity pair for such a map. 
%Our result generalizes the main theorems on the existence of a best proximity pair in the above cited.
Our result is a generalization of the main results given in the aforementioned articles.
%  and $T:A\cup B\to A\cup B$ be a cyclic map. 
 \begin{definition}
 A cyclic map $T$ on a non-empty pair $(A, B)$ of subsets of a Banach space is said to be pointwise cyclic contraction wrt orbits if it satisfies
 \begin{itemize}
  \item[(i)] $\|Tx-Ty\|=d(A,B)$ whenever $\|x-y\|=d(A,B)$ for  $(x,y)\in A\times B$ ;
  \item[(ii)] for each $(x,w)\in (A,B)$ there exists $\alpha(x), \alpha(w)\in (0,1)$ such that\\ $\|Tx-Ty\|\leq \alpha (x)r_x\left(\mathcal{O}^2(y)\right)+\left(1-\alpha(x)\right)d(A,B)$ for all $y\in B$, and \\
    $\|Tw-Tu\|\leq \alpha (w)r_w\left(\mathcal{O}^2(u)\right)+\left(1-\alpha(w)\right)d(A,B)$ for all $u\in A.$
 \end{itemize}

 \end{definition}
 
 It is easy to see that every pointwise cyclic contraction mapping wrt orbits is relatively orbital nonexpansive.
% \begin{example}
% 
% \end{example}
% Now we prove the existence of best proximity pairs for pointwise cyclic contraction mapping wrt orbits.
 \begin{theorem}
 Suppose $(A,B)$ is a closed, weakly compact, convex, sharp proximinal pair of a Banach space $X$ and $T:A\cup B\to A\cup B$ is a pointwise cyclic contraction wrt orbits. Then $T$ has a best proximity pair.
 \end{theorem}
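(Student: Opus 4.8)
The plan is to adapt the proof of Theorem~\ref{first Theorem}, replacing its appeal to weak proximal normal structure by the contraction condition~(ii). Since every pointwise cyclic contraction wrt orbits is relatively orbital nonexpansive and $(A,B)$ is (sharp) proximinal, we have $A_0=A$, $B_0=B$, and $(A,B)$ is cyclically invariant under $T$. Let $\mathscr{F}$ be the family of non-empty closed bounded convex proximinal pairs $(E_1,E_2)$ of subsets of $(A,B)$ that are cyclically invariant under $T$ and satisfy $d(E_1,E_2)=d(A,B)$; it contains $(A,B)$, and by weak compactness every chain in $\mathscr{F}$ has non-empty intersection lying again in $\mathscr{F}$, so Zorn's Lemma yields a minimal element $(F_1,F_2)$. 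Writing $d=d(A,B)$ and $\delta=\delta(F_1,F_2)$, it suffices to show $\delta=d$: then $\|x-y\|=d$ for every $(x,y)\in F_1\times F_2$, and since $Tx\in F_2$ for $x\in F_1$, any $x\in F_1$ satisfies $\|x-Tx\|=d$, a best proximity pair.

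Assume $\delta>d$. Fix $x_0\in F_1$ and set $w_0=x_0'\in F_2$, so that $Tw_0=(Tx_0)'$ and $\|Tx_0-Tw_0\|=d$. Because $T^2(F_2)\subseteq F_2$ we have $\mathcal{O}^2(y)\subseteq F_2$ for $y\in F_2$, hence $r_{x_0}(\mathcal{O}^2(y))=\delta(x_0,\mathcal{O}^2(y))\le\delta(x_0,F_2)\le\delta$; condition~(ii) then gives
\[
\|Tx_0-Ty\|\le \alpha(x_0)\,r_{x_0}(\mathcal{O}^2(y))+(1-\alpha(x_0))\,d\le \alpha(x_0)\,\delta+(1-\alpha(x_0))\,d \quad\text{for all } y\in F_2,
\]
and the last quantity is \emph{strictly} below $\delta$ precisely because $\alpha(x_0)\in(0,1)$ and $d<\delta$. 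Putting $m=\max\{\alpha(x_0),\alpha(w_0)\}\,\delta+\bigl(1-\max\{\alpha(x_0),\alpha(w_0)\}\bigr)d<\delta$, the symmetric estimate at $w_0$ shows $T(F_2)\subseteq B(Tx_0;m)\cap F_1$ and $T(F_1)\subseteq B(Tw_0;m)\cap F_2$.

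Now I would mimic the minimality step of Theorem~\ref{first Theorem}. The pair $(F_1',F_2'):=\bigl(B(Tx_0;m)\cap F_1,\ B(Tw_0;m)\cap F_2\bigr)$ is non-empty, closed, bounded, convex, has $d(F_1',F_2')=d$ (witnessed by the proximal pair $(Tw_0,Tx_0)$), and is cyclically invariant since $T(F_1')\subseteq T(F_1)\subseteq F_2'$ and $T(F_2')\subseteq T(F_2)\subseteq F_1'$; that it is proximinal follows exactly as there, using sharpness of $(A,B)$ and $Tx'=(Tx)'$. Thus $(F_1',F_2')\in\mathscr{F}$, and minimality forces $F_1'=F_1$, $F_2'=F_2$, whence $F_1\subseteq B(Tx_0;m)$ and $F_2\subseteq B(Tw_0;m)$; that is, $\delta(Tx_0,F_1)\le m<\delta$ and $\delta(Tw_0,F_2)\le m<\delta$. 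Hence $(Tw_0,Tx_0)\in F_1\times F_2$ is a nondiametral pair, and Theorem~\ref{Theorem 3.3} produces a best proximity pair for $T$. Equivalently, feeding this nondiametral pair into the sets $L_1=\{x\in F_1:\delta(x,F_2)\le m\}$, $L_2=\{y\in F_2:\delta(y,F_1)\le m\}$ of Theorem~\ref{first Theorem} yields $(L_1,L_2)\in\mathscr{F}$ with $\delta(L_1,L_2)\le m<\delta$, contradicting minimality.

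The main obstacle is precisely this last upgrade. Condition~(ii) only bounds $\delta(Tx_0,T(F_2))$, the farthest distance from $Tx_0\in F_2$ to the \emph{image} $T(F_2)$, whereas nondiametrality requires controlling $\delta(Tx_0,F_1)$ over \emph{all} of $F_1$; bridging this gap is exactly what the ball-intersection pair and the minimality of $(F_1,F_2)$ accomplish. The delicate verification is that $(F_1',F_2')$ is again a proximinal pair, so that it genuinely lies in $\mathscr{F}$ and minimality applies; this is where the hypothesis that $(A,B)$ is sharp proximinal, through the identity $Tx'=(Tx)'$, is indispensable, just as in the proof of Theorem~\ref{first Theorem}.
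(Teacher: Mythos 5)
Your proposal is correct and follows essentially the same route as the paper's proof: a Zorn's-lemma-minimal cyclically invariant proximinal pair from the same family $\mathscr{F}$, ball-intersection sets around images of a proximal pair shrunk strictly below the diameter via condition (ii), minimality to upgrade the bound from $T$-images to the whole pair, and finally Theorem \ref{Theorem 3.3} applied to the resulting proximinal nondiametral pair $(Tw_0,Tx_0)$. The only cosmetic differences are that you work with the global diameter $\delta(F_1,F_2)$ and a common radius $m$ built from $\max\{\alpha(x_0),\alpha(w_0)\}$ where the paper uses the pointwise quantities $\delta(x,K_2)$, $\delta(y,K_1)$ with separate coefficients (your choice in fact makes the proximinality check of the ball-intersection pair cleaner, a point the paper glosses over), and that the paper disposes of the degenerate case $\delta(x,K_2)=d(A,B)$ by exhibiting $(x,Tx)$ directly rather than folding it into the reduction to $\delta(F_1,F_2)=d(A,B)$.
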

 
 \begin{proof}
 Let $\mathscr{F}$ denote the collection of all non-empty proximal closed convex subsets $(H_1,H_2)$ of $(A_0,B_0)$ such that $TH_1\subseteq H_2, TH_2\subseteq H_1$ and $d(H_1,H_2)=d(A,B).$ Since $A_0\cup B_0\in \mathscr{F},$ we have $\mathscr{F}\neq \emptyset$. By Zorn's lemma, $\mathscr{F}$ has a minimal, say, $(K_1,K_2).$ Let $(x,y)\in (K_1,K_2)$ such that $\|x-y\|=d(K_1,K_2)=d(A,B).$ If $\delta(x,K_2)=d(A,B),$ then $d(A,B)=d(K_1,K_2)\leq \|x-Tx\|\leq \delta(x,K_2)=d(A,B).$ This infers $\|x-Tx\|=d(A,B).$ Since, $T$ is pointwise cyclic contraction wrt orbits, we have $\|Tx-T^2x\|=d(A,B).$ Therefore, $(x,Tx)$ is a best proximity pair. Similarly, if $\delta(y,K_1)=d(A,B),$ then $(y,Ty)$ is a best proximity pair. Hence, we may assume that $\delta(x,K_2)>d(A,B)$ and $\delta(y,K_1)>d(A,B)$. Define
 \begin{eqnarray*}
 K_x &=& \left\{z\in K_1: \|z-Tx\|\leq \alpha(x)\delta(x,K_2)+\left(1-\alpha(x)\right)d(A,B)\right\};\\
 K_y &=&\left\{w\in K_2: \|w-Ty\|\leq \alpha(y)\delta(y,K_1)+\left(1-\alpha(x)\right)d(A,B)\right\}.
 \end{eqnarray*}  
 
 Since
 \begin{eqnarray*}
 \|Tx-Ty\|= d(A,B)&= & \alpha(x)d(A,B)+(1-\alpha(x))d(A,B)\\ &< & \alpha(x) \delta(x,K_2)+(1-\alpha(x))d(A,B).
 \end{eqnarray*}
Then $(Ty,Tx)\in (K_x,K_y)$ and hence $K_x\neq \emptyset \neq K_y.$ It is easy to see that $(K_x,K_y)$ is convex. If $\{u_n\}_{n=1}^{\infty}\subset K_x$ is a sequence converges to $u\in X$ weakly, then $u\in K_1.$ Now, $\|u-Tx\|\leq \liminf \{\|u_n-Tx\|:n \in \mathbb{N}\}\leq \alpha(x)\delta(x,K_2)+(1-\alpha(x))d(A,B).$ Then $u\in K_x$ and $K_x$ is closed. Further, for any $u\in K_x,~\|Tu-Ty\|\leq \alpha(y)r_y\left(\mathcal{O}^2(u)\right)+(1-\alpha(y))d(A,B)\leq \alpha(y)\delta(y,K_1)+(1-\alpha(y))d(A,B)$. This implies that $Tu\in K_y.$ Hence, $TK_x\subseteq K_y$. Similarly, $TK_y\subseteq K_x$. Therefore, $(K_x, K_y)\in \mathscr{F}.$ By minimality, $K_x=K_,~K_y = K_2$. Now, for any $w\in K_2, \|w-Ty\|\leq \alpha(y)\delta(y,K_1)+(1-\alpha(y))d(A,B)<\delta(y,K_1)\leq \delta(K_1,K_2).$ Hence, $\delta(Ty, K_2)<\delta(K_1,K_2).$ Similarly, $\delta(Tx, K_1)<\delta(K_1,K_2).$ Thus $(K_1, K_2)$ has a  proximinal nondiametral pair. By Theorem \ref{Theorem 3.3}, T has a best proximity pair.

 \end{proof}

\end{document}